\documentclass[letterpaper, 10 pt, conference]{ieeeconf}

\IEEEoverridecommandlockouts  

\overrideIEEEmargins 

\usepackage[noadjust]{cite}

\newtheorem{theorem}{Theorem}

\newtheorem{assumption}{Assumption}

\usepackage{macros}

\usepackage[utf8]{inputenc}

\title{\bfseries Optimization in Open Networks via Dual Averaging}

\author{Yu-Guan Hsieh$^{1}$, Franck Iutzeler$^{1}$, Jérôme Malick$^{2}$, and Panayotis Mertikopoulos$^{2,3}$
\thanks{This work has been partially supported by MIAI@Grenoble Alpes (ANR-19-P3IA-0003)}
\thanks{$^{1}$ Univ. Grenoble Alpes, LJK, 38000 Grenoble, France {\tt\small firstname.lastname@univ-grenoble-alpes.fr}}%
\thanks{$^{2}$ CNRS, Univ. Grenoble Alpes, 38000 Grenoble, France {\tt\small firstname.lastname@univ-grenoble-alpes.fr}}%
\thanks{$^{3}$ Criteo AI Lab}%
}

\begin{document}

\maketitle

\thispagestyle{empty}
\pagestyle{empty}

\begin{abstract}
In networks of autonomous agents (\eg fleets of vehicles, scattered sensors), the problem of minimizing the sum of the agents' local functions has received a lot of interest. 
We tackle here this distributed optimization problem in the case of open networks when agents can join and leave the network at any time.
Leveraging recent online optimization techniques, we propose and analyze the convergence of a decentralized asynchronous optimization method for open networks.
\end{abstract}

\section{Introduction}
\label{sec:introduction}
Multi-agent systems are a powerful modeling framework for the analysis of signal processing or machine learning over sensor networks, fleets of autonomous vehicles, opi\-nion dynamics, etc. 
This framework calls for decentralized optimization methods where the agents seek to minimize the sum of the individual functions by exchanging information through a communication graph, without the help of a central authority. Indeed, such methods are key to perform distributed computing, signal processing, or learning from scattered sources in communication-constrained or large-scale environments; see e.g \cite{tsitsiklis1984problems,boyd2006randomized,dimakis2010gossip,lian2018asynchronous} and references therein. 

In this regard, decentralized optimization methods have been extensively studied in the literature. Depending on the agents' computing abilities and tasks at hand, several types of algorithms were considered. On the one hand, gradient-based methods such as decentralized gradient descent \cite{nedic2009distributed,bianchi2012convergence,jakovetic2014fast}, and decentralized dual averaging \cite{duchi2011dual,tsianos2012push,lee2017stochastic,colin2016gossip} update the local variables using gradients of the agents' objective functions (see also \cite{NOR18} for a recent review).
On the other hand, splitting methods such as the alternating direction method of multipliers (ADMM) \cite{boyd2011distributed,iutzeler2013asynchronous,wei20131} imply that each agents has to minimize (a regularized version of) its local function, which can be too demanding depending on the application. 

Despite the abundance of literature on this topic, most of them assume a network of fixed composition, that is, the agents that participate in the process always remain the same.
On the contrary, this work focuses on the case of an \emph{open network} where the agents can join and leave the system at any moment.
This can happen in numerous situations, \eg 

\begin{itemize}[leftmargin=5mm,itemsep=1mm]
    \item When a cluster of servers is used to train a machine learning task, a node may leave the network due to a system failure or simply because the resource is acquired by another job.
    New nodes can also be deployed to accelerate the training process or process additional data.
    \item We can also think of the case of volunteer computing where volunteers provide computing resources for a distributed task. The network is naturally open since a device is only involved when the volunteer desires to participate.
    \item In multi-vehicle coordination, the set of vehicles that are considered by the algorithm can evolve with time.
\end{itemize}

Due to the dynamic nature of these open multi-agent systems, several works have studied the stability of consensus algorithm over the mean, maximum, or median of the agents values \cite{Franceschelli2018proportional,Dashti2019dynamic,franceschelli2020stability,de2019lower,de2020open}.
Among the very few works that tackle the problem of decentralized optimization over open networks,  \cite{hendrickx2020stability} showed that decentralized gradient descent is stable when agents/functions change over time if their objectives are sufficiently smooth.

Distributed algorithms also need to cope with asynchronous communications (\ie the agents do not synchronize to communicate between optimization steps) with delays (\ie there may be some gap between sending and receiving times).
Such capacity is an important feature for scalability and flexibility.
The study of this aspect was thus concomitant to the development of different decentralized optimization methods; see \eg \cite{tsianos2012consensus,wu2017decentralized,zhou2018distributed}. 

In this paper, we focus on the particular case of asynchronous open networks where i) the agents can communicate with each other asynchronously following a time-varying communication graph; ii) the exchanges and local processing incur delays; and iii) the agents may join and leave the system for arbitrary periods of time. While the first two points are relatively well studied in the literature as mentioned above (see also \cite{BKR19} for an online distributed algorithm with local processing delays), the last point tremendously complicates the analysis since the system may completely change from one step to another, see \eg \cite{de2020fundamental} and references therein.

To address this challenge, we develop the idea that (offline) optimization problems over open networks can be efficiently handled by tools from \emph{online optimization}.
Using this viewpoint and building on recent results on dual averaging for online learning with delays \cite{hsieh2020multi},
we introduce \acs{DAERON} (\acl{DAERON}), a method for optimization over open networks that allows for asynchronous communications.
\acused{DAERON}
On the theoretical side, we study the algorithm's performance 
with respect to the average (over both time and agents) of the agents' functions.
We then provide numerical simulations on a decentralized regression problem to illustrate the potential of our method.

The rest of the paper is organized as follows.
In \cref{sec:framework}, we formulate the open multi-agent optimization problem mathematically and define the corresponding performance measures.
The main algorithm is described in \cref{sec:DAERON} and analyzed in \cref{sec:analysis}.
\cref{sec:experiments} is dedicated to numerical experiments.
Finally, \cref{sec:conclusion} concludes the paper and provides several clues for future research.

\section{An Open Network of Computing Agents}
\label{sec:framework}
\subsection{Model}

We consider a (possibly infinite) set of agents $\Agents$; each of them associated with an individual convex cost function $\vwup[\obj]\from \vecspace\to \R$. At each time $\run=\running$, only a (finite) subset of agents $\current[\Agents]$ is active and may communicate using undirected communications links $\current[\Edges] \defeq \{ \{\agent,\agentalt\}\in\current[\Agents]^2 : \agent \text{ and } \agentalt \text{ can exchange at } \run \}$.
Let $\vt[\nWorkers]\defeq\card(\vt[\Agents])$ denote the number of active agents at time $\run$ and we will also write $\vt[\nWorkersSum]\defeq\sum_{\runalt=\start}^{\run}\vt[\nWorkers][\runalt]$.



In open multi-agent systems, it is in general impossible to define a temporally invariant global objective to minimize over time.
Since those who are present in the network are usually the entities that we really care about, an alternative is to focus exclusively on the active agents and define the instantaneous loss at time $\run$ as
\begin{align}
    \notag
        \vt[\obj^{\text{inst}}](\point)
        =\frac{1}{\vt[\nWorkers]}\sum_{\agent\in\current[\Agents]} \vwup[\obj](\point).
\end{align}
The problem of interest is then the minimization of $\vt[\obj^{\text{inst}}]$.
However, providing a proper analysis for this time-varying problem is still challenging because
the set of active agents $\current[\Agents]$ may change drastically between two consecutive time instants, which also leads to an abrupt change in the objective. In addition, agreeing on a consensus value in an open network is already a difficult problem \cite{franceschelli2020stability, de2020open}.

\subsection{Quantity of Interest}

Instead of tackling the minimization of $\vt[\obj^{\text{inst}}]$ directly, we draw inspiration from online learning and analyze the \emph{running loss} defined for a time-horizon $\nRuns$ as 
\begin{align}
    \label{eq:running-loss}
    \runloss(\nRuns)\!=\!\frac{1}{\vt[\nWorkersSum][\nRuns]}
    \!\!\left(
      \sum_{\run=\start}^{\nRuns}\!\sum_{\worker\in\current[\Agents]} \vwup[\obj](\vt[\state^{\text{ref}}])
    - \min_{\comp\in\points} \!\sum_{\run=\start}^{\nRuns}\!\sum_{\worker\in\current[\Agents]} \!\vwup[\obj](\comp)\!
    \right)
\end{align}
where $\points\subset\vecspace$ is the shared constrained set and $\vt[\state^{\text{ref}}]$ is a reference point for time $\run$. In the sequel, we will consider algorithms in which each agent $\worker\in\vt[\Agents]$ produces a local variable $\vwt[\state]$ at time $\run$. It is thus reasonable to set $\vt[\state^{\text{ref}}]=\vwtATt[\state]$ for a reference agent $\vt[\worker]\in\vt[\Agents]$ that is chosen arbitrarily at each time, and $\runloss$ would then represent the average network suboptimality over time for these reference agents.
Note that the suboptimality is compared with the best fixed {\em a posteriori} action which is the solution $\sol[\comp]\in\points$ of
\begin{align}
    \notag
   \min_{\comp\in\points}
   \left\{
   \vt[\obj^{\text{run}}][\nRuns](\comp)
   \defeq
   \frac{1}{\vt[\nWorkersSum][\nRuns]}
   \sum_{\run=\start}^{\nRuns}\sum_{\worker\in\current[\Agents]} \vwup[\obj](\comp)
   \right\}.
\end{align}

This quantity mimics the collective regret in \cite{hsieh2020multi} with an additional $1/\vt[\nWorkersSum][\nRuns]$ which takes into account the total number of agents that have participated until time $\nRuns$. The notable difference here with the distributed online optimization literature \cite{HCM13,SJ17,YSVQ12} is that we consider an open network of agents so that the composition of the system can change over time.



\section{DAERON: Dual AvERaging for Open Network}
\label{sec:DAERON}
\newcommand{\smallsim}{\smallsym{\mathrel}{\sim}}

\makeatletter
\newcommand{\smallsym}[2]{#1{\mathpalette\make@small@sym{#2}}}
\newcommand{\make@small@sym}[2]{%
  \vcenter{\hbox{$\m@th\downgrade@style#1#2$}}%
}
\newcommand{\downgrade@style}[1]{%
  \ifx#1\displaystyle\scriptstyle\else
    \ifx#1\textstyle\scriptstyle\else
      \scriptscriptstyle
  \fi\fi
}
\makeatother

\begin{algorithm}[tb]
    \caption{DAERON at node $\worker$ for active\;period\;$\jointime$-$\leavetime$}
    \label{algo:DAERON}
\begin{algorithmic}[1]
    \STATE {\bfseries Parameters:}
    $\jointime$ time when the agent joins the network;\\
    $\leavetime$ time when the agent leaves the network
    \STATE {\bfseries Initialize:}
         $\vwt[\set][\worker][\jointime] \subs \vwt[\set][\workeralt][\jointime]$ for $\workeralt\in\vt[\Agents][\jointime-1]\intersect\vt[\Agents][\jointime]$
    \FOR{$\run=\jointime,\ldots,\leavetime$}
    \STATE Generate the prediction $\vwt[\state]$ using \eqref{eq:DDDA}
    \STATE Compute the local subgradient $\vwt[\gvec]\in\subd\vw[\obj](\vwt[\state])$
    \STATE Send subgradients to other active agents
    \STATE Receive subgradients identified by the index set $\vwt[\gvecs]$
    \STATE Update $\vwtupdate[\set]\subs\vwt[\set]\union\vwt[\gvecs]\union\{\vwt[\gvec]\}$
    \ENDFOR
\end{algorithmic}
\end{algorithm}

\subsection{Algorithm}

\ac{DAERON}, as described in \cref{algo:DAERON}, is a (sub)gradient-based method that applies dual averaging \cite{Nes09} at the level of the whole network.
For this, we assume that given any point $\vwt[\point]\in\points$, the agent $\worker$ is able to compute a subgradient $\vwt[\gvec]\in\subd\vw[\obj](\vwt[\point])$.
This information is transmitted to the whole network and used by all the agents for computing their own variable.
Formally, let us define $\vwt[\set]$ as the index set of the subgradients that the agent $\worker\in\vt[\Agents]$ has received/computed by time $\run$.
Then, the agent $\worker$ generates the variable $\vwt[\state]$ as
%
%
%
\begin{equation}
\label{eq:DDDA}
    \vwt[\state] = \proj_{\points} \left(
    \vt[\point][\start]
    -\vwt[\step]\!\sum_{(\workeralt,\runalt)\in\vwt[\set]}\vwt[\gvec][\workeralt][\runalt]
    \right).
\end{equation}
where $\proj_{\points}\from\dvec\mapsto\argmin_{\point\in\points}\norm{\point-\dvec}$ denotes the 
projection onto $\points$, $\vt[\state][\start]$ is a common starting point, 
and $\vwt[\step]>0$ is a learning rate that can be both time and agent dependent.
Note also that the communication of the subgradients (lines 6-7) can be done asynchronously in parallel with the other steps (lines 4-5) of the algorithm.

\begin{remark}[Arriving agents]
In \cref{algo:DAERON}, we initialize an agent arriving at time $\run$ with the subgradient set of an arbitrary agent in $\vt[\Agents][\run-1]\intersect\vt[\Agents]$, which implicitly assumes that $\vt[\Agents][\run-1]\intersect\vt[\Agents]\neq\emptyset$.
This is not crucial, as what really counts is that the agent is initialized with sufficient knowledge about what the network has computed, but we will stick to this model in the remainder of the paper for simplicity.
\end{remark}



\subsection{Practical Implementation}\label{sec:practical}

Storing all the available subgradients at a node can be prohibitively expensive, or even infeasible since this would require infinite storage capacity when $\run$ goes to infinity.
It is thus important to note that \ac{DAERON} is just a conceptual algorithm that can be implemented in different ways to circumvent this issue. 
We provide below two possible workarounds to demonstrate the flexibility of our method:

\begin{itemize}[leftmargin=5mm,itemsep=1mm]
    \item We can maintain $\vwt[\dvec]=\sum_{(\workeralt,\runalt)\in\vwt[\set]}\vwt[\gvec][\workeralt][\runalt]$ while keeping track of the most recent subgradients in order to communicate them with other agents.
    Formally, suppose that each node has a number of potential neighbors they may be connected to; then a subgradient $\vwt[\gvec][\workeralt][\runalt]$ only needs to be stored until the time that it has been sent to or received from these potential neighbors.
    
    \item If the number of involved agents $\nWorkers$ is small, we may define the sum of the subgradients computed by agent $\agent$ as  $\vwt[\agentsum]=\sum_{\runalt=1}^{\run-1}\vwt[\gvec][\worker][\runalt]$ with the convention $\vwt[\gvec][\worker][\runalt]=0$ when $\worker\notin\vt[\Agents][\runalt]$.
    Each node $\agent\in\vt[\Agents]$ then stores a table of size $\nWorkers\times\vdim$ containing the vectors $\vwt[\agentsum][1][\run-\delayij[\run][1]], \ldots, \vwt[\agentsum][\nWorkers][\run-\delayij[\run][\nWorkers]]$ corresponding to delayed versions of the computed subgradient sums over the network, with  $\delayij$ measuring this delay.
    These vectors are updated through communication. This strategy can also be adopted in a semi-centralized network with $\nWorkers$ central nodes and an arbitrary number of edge computing devices that retrieve information from these central nodes.
\end{itemize}


\subsection{Quantities at play and assumptions}

For our analysis, we make the following technical assumptions on the local cost functions and the constraint set.

\vspace{2pt}
\begin{assumption}
\label{asm:cost}
Each $\vwup[\obj]$ is convex and $\gbound$-Lipschitz. The common constraint set $\points$ is closed and convex.
\end{assumption}
\vspace{2pt}

In order for the problem to make sense, we will assume that the communication delays are upper bounded.

\vspace{2pt}
\begin{assumption}
\label{asm:delay}
The delays of the algorithm are upper bounded 
by $\delaybound$, \ie for any $\run,\runalt\in\N$ such that $\run>\runalt+\delaybound$ and any $\worker\in\vt[\Agents]$, $\workeralt\in\vt[\Agents][\runalt]$, we have $(\workeralt,\runalt)\in\vwt[\set]$.
\end{assumption}
\vspace{2pt}

In particular, \cref{asm:delay} supposes that every piece of information is spread to the whole network in a finite amount of time. While this is quite evident in a static network,
in the case of an open network this requires that the evolution of the network is slow enough with respect to the communication between the agents.

For concreteness, let us consider a model where every node communicates all its available gradients to its neighbors at each iteration. Then for a static network with a fixed topology $\graph=(\vertices,\edges)$, we have clearly $\delaybound=\diam(\graph)$ where $\diam(\graph)$ stands for the diameter of the graph.
The case of an open network with changing topology is considerably more complicated. For instance, in the case of a line graph where at each time a new agent joins at the end of the graph, the diameter grows linearly with $\run$ and it becomes impossible to propagate information to the whole network.
This is one kind of situation we want to avoid here.
Formally, we define $\vt[\Agents^{\workeralt,\runalt}]=\setdef{\worker\in\vt[\Agents]}{(\workeralt,\runalt)\in\vwt[\set]}$ as the set of active agents that possess $\vwt[\gvec][\workeralt][\runalt]$ at time $\run$.
The following example provides a sufficient stability assumption which allows information to spread across the network.

\begin{example}[Bounded delays]
\label{ex:vertex-connected}
Come back to the same situation as above where every node communicates all its available gradients to its neighbors during each iteration. Suppose further that the set of the active agents remain unchanged for periods of $\evolveInt$ iterations (\ie for $\run=\runano\evolveInt,\ldots,(\runano+1)\evolveInt-1$), and that the graph $\graph_\runano=(\vt[\vertices],\Union_{\run=\runano\evolveInt}^{\runano(\evolveInt+1)-1}\vt[\edges][\runalt])$ is $\connectivity$-vertex-connected (\ie removing at least $\connectivity$ nodes is necessary to disconnect it).
If the number of arriving plus leaving agents at the end of iteration $(\runano+1)\evolveInt-1$ is $\nChange<\connectivity$, then for any $\runalt\leq\runano\evolveInt-1$ and $\workeralt\in\vt[\Agents][\runalt]$, the number of nodes that do not possess $\vwt[\gvec][\workeralt][\runalt]$ is reduced by at least $\connectivity-\nChange>0$ after the $\evolveInt$ iterations, \ie $\card(\vt[\Agents^{\workeralt,\runalt}][(\runano+1)\evolveInt])\le\max(0,\card(\vt[\Agents^{\workeralt,\runalt}][\runano\evolveInt])-(\connectivity-\nChange))$.
In that case, the maximal delay is thus bounded by $\max_\run \vt[\nWorkers]\evolveInt/( \connectivity-\nChange)+\evolveInt$. 
\end{example}

The above example means that provided that the number of exiting/incoming agents is not too large compared to the connectivity of the communication graph, the delays are naturally bounded.

\begin{remark}[Loss of information due to agent departure]
When an agent leaves the network,
some of its computed subgradients may be lost for the network.
This can happen either because it never communicated them or because the agents to which it communicated them also left the network.
In \cref{ex:vertex-connected}, the change of composition at the end of iteration $(\runano+1)\evolveInt-1$ could notably cause the loss of the subgradients computed at time $\run=\runano\evolveInt,\ldots,(\runano+1)\evolveInt-1$.
This does not affect the proposed algorithm. The only difference is in the analysis: we will consider that the agent $\workeralt$ is not in $\current[\Agents][\runalt]$ if $\vt[\Agents^{\workeralt,\runalt}]=\emptyset$ starting from some $\run$.\footnote{%
Note that in \cref{ex:vertex-connected}, we have either $\vt[\Agents^{\workeralt,\runalt}]=\emptyset$ or $\vt[\Agents^{\workeralt,\runalt}]=\vt[\Agents]$ for $\run$ sufficiently large (assuming that $\runalt$ is fixed).
}
\end{remark}




\section{Analysis}
\label{sec:analysis}
\subsection{Performance in the general case}

We provide the convergence result for \ac{DAERON} in terms of the running loss defined in \eqref{eq:running-loss}. To do so, we define the quadratic mean and the average number of active agents over time:

\begin{equation}
    \notag
    \rms{\nWorkers}=\sqrt{\frac{1}{\nRuns}\sum_{\run=\start}^\nRuns\vt[\nWorkers^2]}
    \quad\text{and}\quad
    \avg[\nWorkers]=\frac{1}{\nRuns}\sum_{\run=\start}^\nRuns\vt[\nWorkers].
\end{equation}
Note that we always have $\rms{\nWorkers}\ge\avg[\nWorkers]$.


\vspace*{1ex}
\begin{theorem}
\label{thm:delay-regret-global}
Let \cref{asm:cost,asm:delay} hold.
Running \ac{DAERON} with constant learning rate
\begin{equation}
    \notag
    \vwt[\step]\equiv
    \step
    =
    \frac{\smallradius_0}{\rms{\nWorkers}\gbound\sqrt{(3\delaybound+1)\nRuns}}
\end{equation}
for some $\smallradius_0>0$ guarantees that 
\begin{equation}
    \label{eq:regret-fixed-lr}
    \runloss(\nRuns)
    \le \frac{\rms{\nWorkers}}{\avg[\nWorkers]}
    \frac{2\regcst\smallradius\gbound\sqrt{3\delaybound+1}}{\sqrt{\nRuns}},
\end{equation}
where $\smallradius=\norm{\sol[\comp]-\vt[\state][\start]}^2$ and
$\regcst=\max(\smallradius/(2\smallradius_0), \smallradius_0/\smallradius)$.
\end{theorem}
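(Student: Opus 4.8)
The plan is to reduce the running loss to a \emph{collective regret} and then control that regret with a delayed dual-averaging potential argument, in the spirit of \cite{hsieh2020multi} but accounting for the open, time-varying topology. Since $\vt[\nWorkersSum][\nRuns]=\nRuns\avg[\nWorkers]$, it suffices to bound the numerator of \eqref{eq:running-loss}, namely $\sum_{\run=\start}^{\nRuns}\sum_{\worker\in\current[\Agents]}\bigl(\vwup[\obj](\vt[\state^{\text{ref}}])-\vwup[\obj](\sol[\comp])\bigr)$, by $2\regcst\smallradius\gbound\rms{\nWorkers}\sqrt{(3\delaybound+1)\nRuns}$ and then divide back by $\vt[\nWorkersSum][\nRuns]$. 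First I would deal with the fact that the loss is evaluated at the reference point $\vt[\state^{\text{ref}}]$ rather than at each agent's own iterate $\vwt[\state]$: by \cref{asm:cost}, $\vwup[\obj](\vt[\state^{\text{ref}}])-\vwup[\obj](\vwt[\state])\le\gbound\norm{\vt[\state^{\text{ref}}]-\vwt[\state]}$, and since both points are projections of $\vt[\state][\start]$ minus $\step$ times a sum of subgradients, the non-expansiveness of $\proj_{\points}$ together with \cref{asm:delay} (the two information sets coincide on everything older than $\delaybound$) bounds this gap by $\step\gbound$ times the subgradient mass produced in the last $\delaybound$ rounds. After this reduction, convexity of each $\vwup[\obj]$ gives $\vwup[\obj](\vwt[\state])-\vwup[\obj](\sol[\comp])\le\langle\vwt[\gvec],\vwt[\state]-\sol[\comp]\rangle$, so the task becomes bounding the linearized regret $\sum_{\run,\worker}\langle\vwt[\gvec],\vwt[\state]-\sol[\comp]\rangle$ plus the accumulated reference-gap remainder.

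For the linearized regret I would set up the usual dual-averaging energy argument on the aggregate dual variable $\vwt[\dvec]=\sum_{(\workeralt,\runalt)\in\vwt[\set]}\vwt[\gvec][\workeralt][\runalt]$, comparing the delayed iterate $\vwt[\state]=\proj_{\points}(\vt[\state][\start]-\step\vwt[\dvec])$ with the ``undelayed'' iterate built from the full prefix sum of subgradients. The undelayed contribution telescopes into the standard bound $\smallradius/(2\step)+\frac{\step}{2}\sum_{\run,\worker}\norm{\vwt[\gvec]}^2$, while the delayed correction is again an inner product between a current subgradient and the yet-unreceived mass, which \cref{asm:delay} confines to a window of length $\delaybound$. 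The crucial bookkeeping observation is that at each round $\run$ the three sources of slack---the delayed correction, the reference-point gap above, and the subgradients irretrievably lost to departing agents (which, per the convention introduced after \cref{ex:vertex-connected}, are simply treated as absent)---each couple the $\vt[\nWorkers]$ current subgradients with the $\sum_{\runalt=\run-\delaybound}^{\run}\vt[\nWorkers][\runalt]$ recent ones. Summing over $\run$ and collapsing each window onto its heaviest endpoint turns all of them into a single penalty proportional to $\gbound^2\sum_{\run=\start}^{\nRuns}\vt[\nWorkers^2]$; this is precisely where the constant $3\delaybound+1$ and the quadratic mean $\rms{\nWorkers}$ (rather than the plain average $\avg[\nWorkers]$) enter.

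Collecting terms gives a numerator bound of the form $\smallradius/(2\step)+\frac{\step}{2}(3\delaybound+1)\gbound^2\nRuns\rms{\nWorkers}^2$. Substituting the prescribed constant $\step$ balances the two contributions, and estimating their sum by twice their maximum produces the factor $\regcst=\max(\smallradius/(2\smallradius_{0}),\smallradius_{0}/\smallradius)$, which measures the mismatch between the tuning constant $\smallradius_{0}$ and the true $\smallradius$; dividing by $\vt[\nWorkersSum][\nRuns]=\nRuns\avg[\nWorkers]$ and using $\sqrt{\nRuns}\,\rms{\nWorkers}=\sqrt{\sum_{\run}\vt[\nWorkers^2]}$ then yields \eqref{eq:regret-fixed-lr}. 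I expect the main obstacle to lie in the second paragraph: making the delayed-feedback energy argument rigorous over an \emph{open} network, i.e.\ simultaneously controlling the divergence between distinct agents' dual iterates, the reference-agent gap, and the lost subgradients, and verifying that these aggregate into the single $(3\delaybound+1)\gbound^2\sum_{\run}\vt[\nWorkers^2]$ penalty without leaving cross terms that would degrade the $1/\sqrt{\nRuns}$ rate.
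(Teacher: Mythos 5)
Your proposal is correct and follows essentially the same route as the paper's proof: your ``undelayed iterate built from the full prefix sum'' is exactly the paper's virtual iterate $\vt[\virtual]$, the reference-gap and linearization steps coincide, the delay-window cross terms $\vt[\nWorkers]\vt[\nWorkers][\runalt]$ are likewise turned into squares to yield the $(3\delaybound+1)\gbound^2\sum_{\run=\start}^{\nRuns}\vt[\nWorkers^2]$ penalty, and the step-size substitution producing the factor $\regcst$ matches verbatim. The only slip is notational: since all subgradients of a given round hit the same virtual iterate, the dual-averaging term must be $\frac{\step}{2}\sum_{\run=\start}^{\nRuns}\big\|\sum_{\worker\in\vt[\Agents]}\vwt[\gvec]\big\|^2 \le \frac{\step}{2}\gbound^2\sum_{\run=\start}^{\nRuns}\vt[\nWorkers^2]$ rather than $\frac{\step}{2}\sum_{\run,\worker}\norm{\vwt[\gvec]}^2$, which is consistent with (and required by) the $\rms{\nWorkers}^2$ appearing in your final collected bound.
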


\begin{proof}
An important feature of \name is that the subgradients are not always applied to the points where they are evaluated.
To accommodate this in our analysis, we consider the virtual iterates $\seqinf[\vt[\virtual]]$ defined by
\begin{equation}
    \notag
    \vt[\virtual] = \proj_{\points}\left(\vt[\state][\start]-
    \step\sum_{\run=\start}^{\run-1}\sum_{\worker\in\vt[\Agents]}\vwt[\gvec]\right).
\end{equation}

From the regret analysis of dual averaging (see, \eg \cite[Prop. 2]{DHS11}), the following holds for all $\comp\in\points$,
\begin{equation}
    \notag
    \begin{aligned}[b]
    \sum_{\run=1}\sum_{\worker\in\vt[\Agents]}
    \product{\vwt[\gvec]}{\vt[\virtual]-\comp}
    &\le
    \frac{\norm{\comp-\vt[\state][\start]}^2}{2\step}
    +\frac{\step}{2}\sum_{\run=1}^{\nRuns}
    \bigg\|\sum_{\worker\in\vt[\Agents]}\vwt[\gvec]\bigg\|^2\\
    &\le
    \frac{\norm{\comp-\vt[\state][\start]}^2}{2\step}
    +\frac{\step}{2}\sum_{\run=1}^{\nRuns}\vt[\nWorkers^2]\gbound^2.
    \end{aligned}
\end{equation}
In the second line we have used the Lipschitz continuity of the functions which implies that $\norm{\vwt[\gvec]}\le\gbound$.
Next, by using the convexity and the Lipschitz continuity of the functions, each term of \eqref{eq:running-loss} can be bounded for any $\comp\in\points$ by
\begin{align}
    \notag
    \vw[\obj](\vt[\state^{\text{ref}}]) - \vw[\obj](\comp)
    &= \vw[\obj](\vwtATt[\state]) - \vw[\obj](\vwt[\state])
    + \vw[\obj](\vwt[\state]) - \vw[\obj](\comp)\\
    \notag
    &\le \gbound\norm{\vwtATt[\state]-\vwt[\state]}
    + \product{\vwt[\gvec]}{\vwt[\state]-\comp}.
\end{align}
We proceed to bound the second term in the above inequality
\begin{equation}
    \notag
    \begin{aligned}[b]
    \product{\vwt[\gvec]}{\vwt[\state]-\comp}
    &= \product{\vwt[\gvec]}{\vwt[\state]-\vt[\virtual]}
    + \product{\vwt[\gvec]}{\vt[\virtual]-\comp}\\
    &\le \gbound\norm{\vwt[\state]-\vt[\virtual]}
    + \product{\vwt[\gvec]}{\vt[\virtual]-\comp}.
    \end{aligned}
\end{equation}
Using the triangle inequality, we get $\norm{\vwtATt[\state]-\vwt[\state]}\le\norm{\vwtATt[\state]-\vt[\virtual]}+\norm{\vwt[\state]-\vt[\virtual]}$.
Now, let $\vt[\Gamma] = \max_{\worker\in\vt[\Agents]}\norm{\vwt[\state]-\vt[\virtual]}$ and set $\comp\subs\sol[\comp]$. We have from the above
\begin{equation}
    \notag
    \runloss(\nRuns)
    \le \frac{1}{\vt[\nWorkersSum]}
    \left(
    \frac{\norm{\sol[\comp]-\vt[\state][\start]}^2}{2\step}
    +\sum_{\run=1}^{\nRuns}
    (\frac{\step}{2}\vt[\nWorkers^2]\gbound^2
    +3\vt[\nWorkers]\gbound\vt[\Gamma])
    \right).
\end{equation}

To conclude, we may bound $\vt[\Gamma]$ thanks to the bounded delay assumption.
In fact, since the projection operator is non-expansive, it holds for all $\worker\in\vt[\Agents]$ that 
\begin{equation}
    \notag
    \addtocounter{footnote}{-1}
    \begin{multlined}
    \norm{\vwt[\state]-\vt[\virtual]}
    \le \bigg\|
    \step\sum_{\run=\start}^{\run-1}
    \sum_{\workeralt\in\vt[\Agents]}\vwt[\gvec]
    - 
    \step\sum_{(\workeralt,\runalt)\in\vwt[\set]}\vwt[\gvec][\workeralt][\runalt]
    \bigg\| \\
    \le \bigg\|
    \step\sum_{\runalt=\run-\delaybound}^{\run-1}
    \sum_{\workeralt\in\vt[\Agents][\runalt]}\vwt[\gvec][\workeralt][\runalt]
    \bigg\|
    \le \step\sum_{\runalt=\run-\delaybound}^{\run-1} \vt[\nWorkers][\runalt]\gbound.\footnotemark
    \end{multlined}
    \footnotetext{If $\runalt\le0$, then $\vt[\Agents][\runalt]=\emptyset$ and $\vt[\nWorkers]=0$.}
\end{equation}
Then, with $\vt[\nWorkers]\vt[\nWorkers][\runalt]\le(\vt[\nWorkers^2]+\vt[\nWorkers^2][\runalt])/2$, we obtain the following
\begin{equation}
    \notag
    \runloss(\nRuns)
    \le \frac{1}{\vt[\nWorkersSum]}
    \left(
    \frac{\norm{\sol[\comp]-\vt[\state][\start]}^2}{2\step}
    +\step\gbound^2\sum_{\run=1}^{\nRuns}
    (3\delaybound+1)\vt[\nWorkers^2]
    \right).
\end{equation}
Inequality \eqref{eq:regret-fixed-lr} follows immediately by the definition of $\rms{\nWorkers}$, $\avg[\nWorkers]$, and $\regcst$.
\end{proof}

\cref{thm:delay-regret-global} shows that when the ratio $\rms{\nWorkers}/\avg[\nWorkers]$ is upper bounded, the running loss of the algorithm has a convergence rate in $\bigoh(1/\sqrt{\nRuns})$.
The factor $\rms{\nWorkers}/\avg[\nWorkers]$ also indicates that the algorithm converges slower when the number of active agents varies greatly across iterations,
which is expected because the algorithm would need more time to accommodate the change in this situation.

\begin{remark}[Extensions]
One drawback of the theorem is that the expression of the learning rate involves both the sqaure mean number of the agents $\rms{\nWorkers}$ and the time horizon $\nRuns$.
In a continuously evolving network, neither of these two quantities are known in advance.
We provide below several alternatives which allow us to establish the same $\bigoh(1/\sqrt{\nRuns})$ rate without these quantities. Proofs are variations of the above proof; we omit details due to space limitations. 
\begin{itemize}[leftmargin=5mm,itemsep=1mm]
    \item If the number $\vt[\nWorkers]$ is known to every agent.
    We may use $\vwt[\step]=\vt[\step]=\Theta(1/\sqrt{\delaybound(\sum_{\runalt=1}^{\run}\vt[\nWorkers^2][\runalt])})$.
    \item If $\nWorkers_{\max}=\max_{1\le\run\le\nRuns}\vt[\nWorkers]$ can be estimated and the agents have access to a global clock that indicates $\run$, we can take $\vwt[\step]=\vt[\step]=\Theta(1/(\nWorkers_{\max}\sqrt{\delaybound\run}))$.
    \item Note that $\sqrt{\sum_{\runalt=1}^{\run}\vt[\nWorkers^2][\runalt]}\le\sqrt{\nWorkers_{\max}}\sqrt{\vt[\nWorkersSum]}$.
    Therefore, provided that $\nWorkers_{\max}$ is known, another alternative is to use $\vwt[\step]=\Theta(1/\sqrt{\delaybound\nWorkers_{\max}\vt[\nWorkersSum]})$. This does not require to know $\vt[\nWorkers]$ explicitly since $\vt[\nWorkersSum]$ can be estimated by $\card(\vwt[\set])$. In particular, under \cref{asm:delay}, it holds $\vt[\nWorkersSum]\le\card(\vwt[\set])+(\delaybound+1)\nWorkers_{\max}$.
\medskip
\end{itemize}
\end{remark}

\subsection{The case of fixed agents $\current[\Agents]=\Agents$}

For comparison with existing literature (\eg \cite{duchi2011dual}), we now turn back to the case of a ``closed network'' where all the agents are active at each iteration.
In this situation, we can define the (fixed) global loss as
\begin{equation}
    \notag
    \obj(\point) = \frac{1}{\nWorkers} \sum_{\worker\in\Agents} \vwup[\obj](\point),
\end{equation}
where $\nWorkers = \card(\Agents)$ is the number of agents. We have $\rms{\nWorkers}=\avg[\nWorkers]=\nWorkers$ and thus $\rms{\nWorkers}/\avg[\nWorkers]=1$.
As an immediate corollary of \cref{thm:delay-regret-global}, if we apply \ac{DAERON} with the constant stepsize
\[\vwt[\step]\equiv\step=\frac{\smallradius}{\nWorkers\gbound\sqrt{(3\delaybound+1)\nRuns}},\]
then for any  agent $\worker\in\Agents$, 
%
\begin{align*}
 \obj
 \left( \frac{1}{\nRuns}\sum_{\run=\start}^{\nRuns}\vwt[\state]\right)
    - \min_{\comp\in\points} \obj(\comp) &\le
\frac{1}{\nRuns}\sum_{\run=\start}^{\nRuns} \obj(\vwt[\state])
    - \min_{\comp\in\points} \obj(\comp) \\
    &\le \frac{    2\smallradius\gbound\sqrt{3\delaybound+1}}{\sqrt{\nRuns}}
.
\end{align*}
This means
that the running average of each agent's iterates decreases in global suboptimality at rate $\bigoh(1/\sqrt{\nRuns})$,
which matches the rate of \cite[Th.~2]{duchi2011dual} for the slightly different decentralized dual averaging algorithm.\footnote{In \cite{duchi2011dual}, the subgradient are averaged by gossiping while in  \ac{DAERON}, they are directly exchanged.}
Going one step further, the same result would still hold under asynchronous activation as long as the activation patterns can be described by a stationary probability distribution. 
On the other hand, \cref{thm:delay-regret-global} also allows us to prove the convergence of the algorithm in an open network whose composition stops changing after finite time.

\section{Numerical Experiments}
\label{sec:experiments}
In this section, we demonstrate the effectiveness of \ac{DAERON} with experiments on a static and an open network.

\subsection{Problem Description}

Let us consider a decentralized \ac{LAD} regression model.
Given a data set evenly distributed on $\nWorkers$ nodes $(\expvar_{\worker\indg},\resvar_{\worker\indg})_{\worker,\indg\in\oneto{\nWorkers}\times\oneto{\nSamplesPerWorker}}$ with $\expvar_{\worker\indg}$ in $\vecspace$ and $\resvar_{\worker\indg}\in\R$, it consists in solving 
\begin{equation}
    \label{eq:LAD}
    \min_{\point\in\vecspace}
    \left\{\obj(\point)\defeq
    \frac{1}{\nWorkers}\sumworker\frac{1}{\nSamplesPerWorker}\sum_{\indg=1}^\nSamplesPerWorker \abs{\expvar_{\worker\indg}^{\top}\point-\resvar_{\worker\indg}}\right\}.
\end{equation}
Compared to least square regression, \ac{LAD} is known to be more resistant to the presence of outliers. 
Although the use of absolute value makes the problem non-differentiable, \ac{DAERON} can be run with subgradients as suggested by our analysis.
For the experiments, we generate synthetic data as follows:
\begin{enumerate}
    \item The ground truth model $\groundtruth\in[-5,5]^{\vdim}$ is drawn from a uniform distribution.
    \item The local model $\vw[\groundtruth]$ of the node $\worker$ is obtained by perturbing $\groundtruth$ with a Gaussian noise, \ie $\vw[\groundtruth]=\groundtruth+\vw[\varepsilon]$ where $\vw[\epsilon]\sim\mathcal{N}(0,\Id_{\vdim})$.
    \item We sample $\expvar_{\worker\indg}\sim\mathcal{N}(0,\Id_{\vdim})$ and compute $\resvar_{\worker\indg} = \expvar_{\worker\indg}^{\top}\vw[\groundtruth]+\varepsilon_{\worker\indg}$ with $\varepsilon_{\worker\indg}\sim\mathcal{N}(0,1)$.
    \item On each node, a random portion of samples are corrupted. For these samples, we replace $\resvar_{\worker\indg}$ by a random value generated from a Gaussian distribution.
\end{enumerate}
In the above, we introduce the second and the fourth steps mainly for two reasons. First, it makes the problem more heterogeneous, and thus more difficult. Second, it makes the communication between agents more important for finding a good approximation of $\groundtruth$.
In the following, we will take $\nWorkers=64$ nodes, $\nSamplesPerWorker=200$ samples per node, and dimension  $\vdim=20$.
On each node, the number of corrupted samples is random in $\intinterval{0}{120}$. We also verify that the solution $\expsol$ of \eqref{eq:LAD} is not too far from $\groundtruth$.

\begin{algorithm}[tb]
    \caption{DAERON at each node $\worker$ as implemented in \cref{subsec:exp-open}}
    \label{algo:open-network}
\begin{algorithmic}[1]
    \STATE {\bfseries Initialize:}
         $\vwt[\set][\worker][\start] \subs \emptyset$, activation status $\vw[\activeStatus]\in\{0,1\}$, network parameters $K\in\N$, $p\in[0,1]$
    \FOR{$\run=\running$}
    \STATE \texttt{\underline{Agent update}}\\[0.1em]
    \IF{$\vw[\activeStatus]=1$}
    \STATE Get randomly paired with another active agent $\workeralt$
    \STATE Compute $\vwt[\state]$ by \eqref{eq:DDDA} and $\vwt[\gvec]\in\subd \vw[\obj](\vwt[\state])$
    \STATE Update $\vwtupdate[\set]\subs\vwt[\set]\union\vwt[\set][\workeralt]\union\{\vwt[\gvec]\}$
    \ENDIF
    \STATE \texttt{\underline{Network evolution}}\\[0.1em]
    \IF{$(\run+1)\equiv 0\mod K$}
    \STATE Draw a Bernoulli random variable $\vw[z]\sim\mathcal{B}(p)$
    \STATE $\vw[\activeStatus] \subs \vw[\activeStatus]+\vw[z]\mod 2$
    \IF{$\vw[z]=1$ and $\vw[\activeStatus] = 1$}
    \STATE Pick randomly $\workeralt \in \vt[\Agents]\intersect\vt[\Agents][\run+1]$
    \STATE Update $\vwtupdate[\set]\subs\vwtupdate[\set][\workeralt]$
    \ENDIF
    \ENDIF
    \ENDFOR
\end{algorithmic}
\end{algorithm}

 

\subsection{Static network}

We first investigate the performance of the algorithm on a static network.
The nodes are arranged in a $2$d grid of size $8\times8$.
Adjacent nodes exchange gradients at each iteration.
Communication-computation overlap is allowed for better efficiency--- in \cref{algo:DAERON}, this means that lines 4-5 and 6-7 are run in parallel.
Then, with a constant stepsize $\step$, the update writes
\begin{equation}
    \notag
    \vwtupdate[\state] = \vwt[\state] - \step \sumworker \vwt[\gvec][\workeralt][\run-\delay_{\workeralt,\worker}],
\end{equation}
where $\delay_{\workeralt,\worker}$ is the distance between the nodes $\workeralt$ and $\worker$.
For illustration purposes, we also compare with a \ac{DGD} method \cite{nedic2009distributed} with constant stepsize\;$\stepalt$ and a mixing matrix $\mixing=(\mixingw_{\worker,\workeralt})$. Its update is
\begin{equation}
    \notag
    \vwtupdate[\state] = \sum_{\workeralt=1}^{\nWorkers}\mixingw_{\worker,\workeralt}\vwt[\state][\workeralt] - \stepalt \vwt[\gvec]
\end{equation}
and we take $\mixing$ as the Metropolis matrix of the graph in our experiments:
\begin{equation}
    \notag
    \mixingw_{\worker,\workeralt}
    = \left\{
        \begin{array}{ll}
            1/(\max(\degr(\worker),\degr(\workeralt))+1) & \text{if } \{\worker,\workeralt\}\in\edges,\\
            1 - \sum_{\indg=1}^{\nWorkers}\mixingw_{\worker,\indg} & \text{if } \worker=\workeralt,\\
            0 & \text{otherwise},
        \end{array}
        \right.
\end{equation}
with $\degr(\worker)$ denoting the degree of the node $\worker$.

For a proper comparison of the two algorithms, it is important to notice that a subgradient is sent to all the $\nWorkers$ nodes in \name while it is averaged out in \ac{DGD}.
Therefore, we will take $\stepalt=\nWorkers\step$ and refer to it as the \emph{effective learning rate} of both methods.
With this in mind, in \cref{subfig:convergence-static-network} we plot the convergence of the averaged optimality gap $(1/\nWorkers)\sumworker\obj(\vwt[\state])-\min\obj$ for \ac{DAERON} and \ac{DGD} with different choices of $\stepalt$.

Interestingly, we observe that when using the same effective learning rate, the two algorithms establish similar convergence behavior until reaching their respective fixed points. However, \ac{DAERON} is able to converge to a point with higher accuracy.
We believe that this is because the variables $(\vwt[\state])_{\worker\in\Agents}$ tend to be closer to each other in \ac{DAERON}.\footnote{Indeed, let $\Delta$ and $1-\lambda$ be respectively the diameter of the graph and the spectral gap of the mixing matrix.
Then, as shown in the proof of \cref{thm:delay-regret-global}, for \ac{DAERON} we can roughly bound $\norm{\vwt[\state]-\vwt[\state][\workeralt]}$ by $\nWorkers\step\delaybound\gbound=\Delta\stepalt\gbound$.
As for \ac{DGD}, it is known that we have asymptotically $\norm{\vwt[\state]-\vwt[\state][\workeralt]}\lesssim\gamma\gbound/(1-\lambda)$ \cite[Lem.~11]{NOR18}.
Since it generally holds $\Delta\le1/(1-\lambda)$ and this is notably the case for the graph that we consider, this provides a possible explanation for the superiority of \ac{DAERON} over \ac{DGD}.}


\begin{figure}
    \centering
    \begin{subfigure}[b]{0.49\linewidth}
    \centering
    \includegraphics[width=\textwidth]{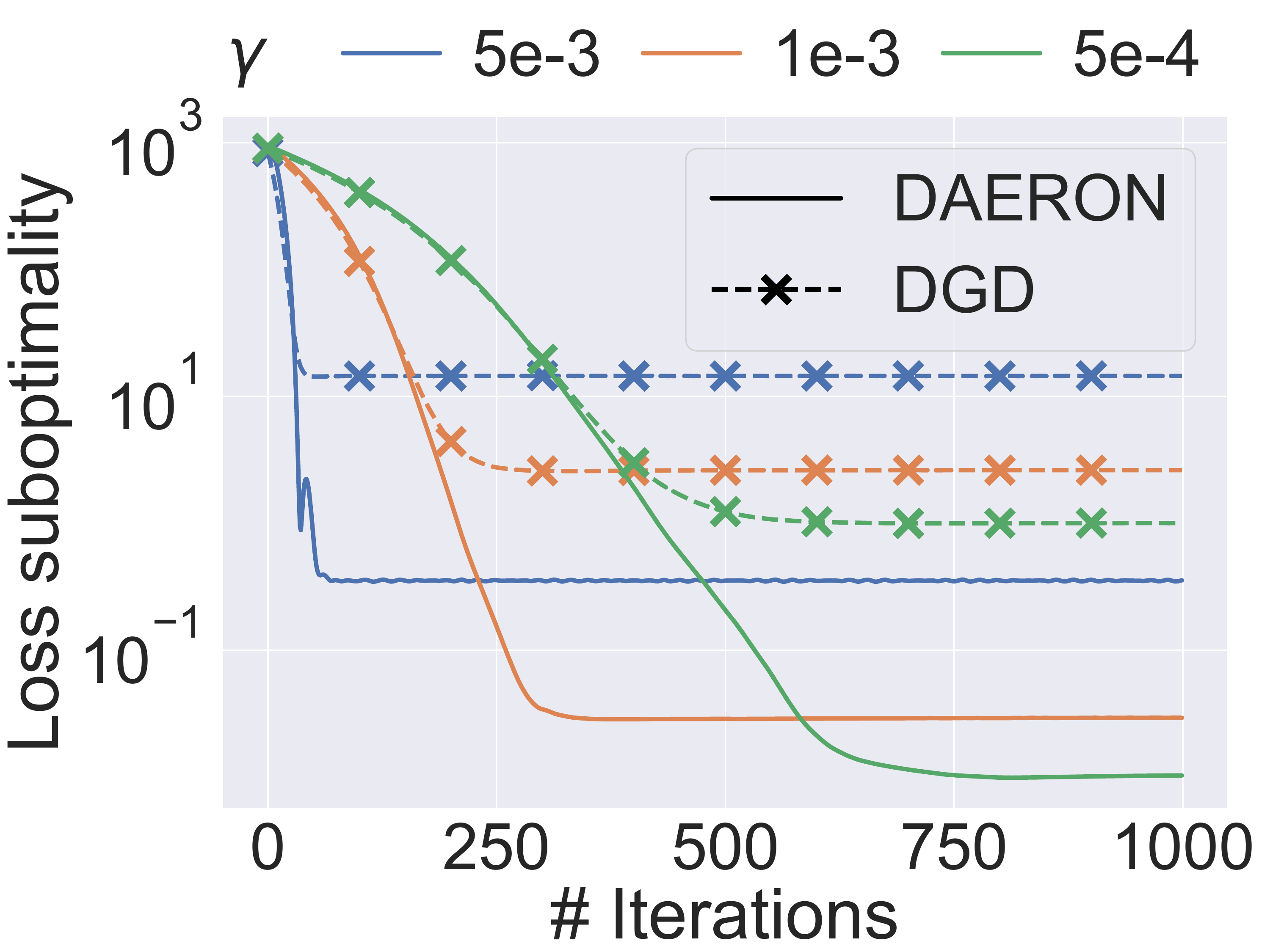}
    \caption{Static network}
    \label{subfig:convergence-static-network}
    \end{subfigure}
    \hfill
    \begin{subfigure}[b]{0.49\linewidth}
    \centering
    \includegraphics[width=\textwidth]{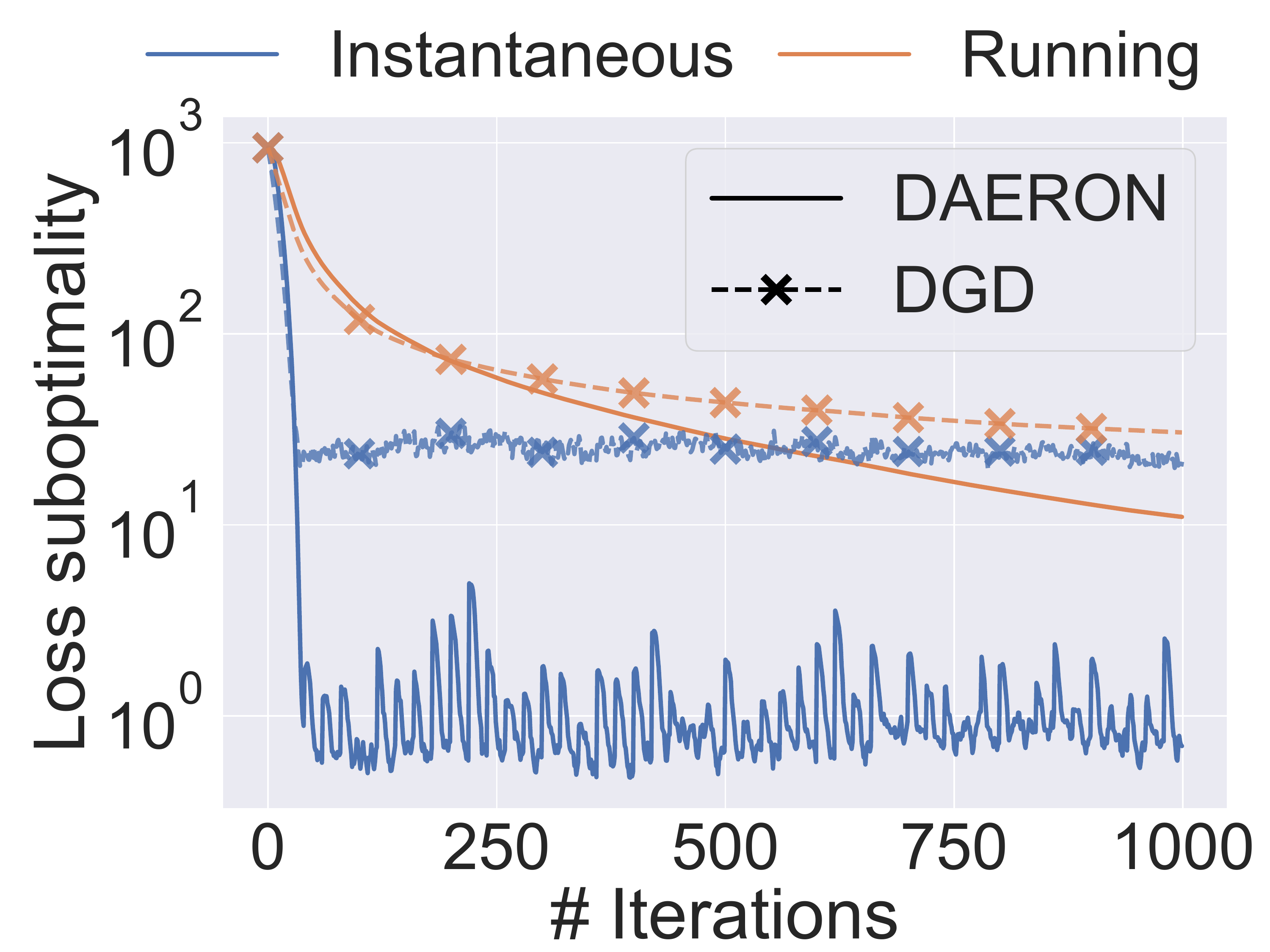}
    \caption{Open network}
    \label{subfig:convergence-open-network}
    \end{subfigure}
    \caption{Comparison of \ac{DAERON} and \ac{DGD}.
    For a static network we plot in (a) the averaged suboptimality. For an open network we plot in (b)
    the averaged instantaneous suboptimality \eqref{eq:avg-inst-loss} and the averaged running loss \eqref{eq:avg-run-loss}.}
    \label{fig:convergence}
\end{figure}

\subsection{Open network}
\label{subsec:exp-open}

Now that we have shown that \ac{DAERON} performs comparably to standard decentralized optimization methods in a static network, we proceed to study its behavior in an open multi-agent system (\cref{algo:open-network}).
Following \cite{VMN18}, we model the arrivals and departures of the agents by a Bernoulli process.
Initially, only half of the $64$ nodes are active. Then, every $K=20$ iterations, each agent may change its activation status (\ie from active to inactive or vice-versa) with probability $p=0.05$.
At each iteration, the active nodes are randomly paired with each other, then each pair synchronizes their gradients.\footnote{%
If there is an odd number of nodes, one node is ignored in this process.}
Formally, if nodes $\worker$ and $\workeralt$ are paired at time $\run$, then 
%
$
    \setexclude{\vwtupdate[\set]}{\{\vwt[\gvec]\}} = \setexclude{\vwtupdate[\set][\workeralt]}{\{\vwt[\gvec][\workeralt]\}}
    =\vwt[\set]\union\vwt[\set][\workeralt].
$
%
In the spirit of \ac{DGD}, we also implement an algorithm which directly updates the primal variables as
%
\begin{equation}
    \notag
    \begin{aligned}
    \vwtupdate[\state]&=\frac{\vwt[\state]+\vwt[\state][\workeralt]}{2}-\stepalt\vwt[\gvec],\\
    \vwtupdate[\state][\workeralt]&=\frac{\vwt[\state]+\vwt[\state][\workeralt]}{2}-\stepalt\vwt[\gvec][\workeralt].
    \end{aligned}
\end{equation}
In both cases, an agent that becomes active at the end of round $\run-1$ is assigned the state variable (\ie $\vwt[\set]$ or $\vwt[\state]$) of a random node in $\vt[\Agents][\run-1]\intersect\vt[\Agents]$.
We take $\stepalt=\nWorkers\step/2$ in our experiment since on average $\nWorkers/2$ nodes are active.

As for the performance measure, we consider the averaged instantaneous optimality gap $\avgInstLoss(\run)$ and the averaged running loss $\avgRunLoss(\run)$ defined by
%
\begin{align}
\avgInstLoss(\run)
&=
\frac{1}{\vt[\nWorkers]}\sum_{\worker\in\vt[\Agents]}\vt[\obj^{\text{inst}}](\vwt[\state])-\min\vtp[\obj]
\label{eq:avg-inst-loss}
\\
\avgRunLoss(\run)
&=
\frac{1}{\vt[\nWorkersSum]}
\sum_{\runalt=1}^{\run}
\sum_{\worker\in\vt[\Agents][\runalt]}\vt[\obj^{\text{inst}}][\runalt](\vwt[\state][\worker][\runalt])
-\min\vt[\obj^{\text{run}}].
\label{eq:avg-run-loss}
\end{align}
The evolution of these two measures for $\stepalt=0.005$ are plotted in \cref{subfig:convergence-open-network}.\footnote{This is roughly the largest stepsize that leads to the decrease of the losses.
We observe similar convergence patterns for smaller stepsizes, while the performance gap between the two algorithm diminishes.}
We see that both algorithms are able to converge to an area where potential solutions are located whereas \ac{DAERON} can get much closer to the optimum of $\vt[\instObj]$.
Moreover, we observe that the instantaneous loss for \ac{DAERON} increases abruptly when the set of active agents changes and gets decreased again afterwards.
This indicates that the algorithm actually has the ability to track the instantaneous solution.


\section{Conclusion}
\label{sec:conclusion}
In this paper, we introduced a decentralized optimization method for open multi-agent systems, in which agents can freely join or leave the network during the process.
This setting is particularly challenging since there is not a clear objective to minimize and even exact convergence to a consensus is generally out of reach.
These difficulties led us to adopting techniques and performance measures from online optimization.
We proved that our algorithm benefits from a $\bigoh(1/\sqrt{\nRuns})$ convergence rate with respect to the running average of the functions of the agents present in the network. 
In our simulations, we showed that our method outperformed decentralized subgradient descent on a least absolute deviation problem.

The positive results in our numerical experiments also opens up several new research directions: To analyze the tracking behavior, it could be more relevant to focus on the dynamic regret \cite{HW15,SJ17} or to adopt the perspective of time-varying optimization \cite{Simonetto17,DSSM20}.
The goal here is either to bound the sum of the instantaneous optimality gaps or to show that the realized actions is always close to the current optimum.
Another promising direction is to explore the potential combination of \ac{DAERON} and consensus-based methods, which may lead to superior performance.
Given the variety of problems and open networks that we are facing, it is clear that there is not a single method that would outperform all the others. It is thus also important to find the most suitable algorithm for each specific setup.




\section*{Acknowledgments}
This work has been partially supported by MIAI Grenoble Alpes (ANR-19-P3IA-0003).

\bibliographystyle{IEEEtran}
\bibliography{IEEEabrv,references}

\end{document}